\newcommand\cyr{%
 \renewcommand\rmdefault{wncyr}%
 \renewcommand\sfdefault{wncyss}%
 \renewcommand\encodingdefault{OT2}%
\normalfont\selectfont} \DeclareTextFontCommand{\textcyr}{\cyr}
\newtheorem{theorem}{Theorem}
\newtheorem{lemma}[theorem]{Lemma}
\newtheorem{proposition}[theorem]{Proposition}
\newtheorem{remark}[theorem]{Remark}
\DeclareMathOperator{\sign}{sign}
\long\def\symbolfootnote[#1]#2{\begingroup%
\def\thefootnote{\fnsymbol{footnote}}\footnote[#1]{#2}\endgroup}
\title{An explicit two-source extractor with min-entropy rate near $4/9$}
\author{Mark Lewko}
\date{}
\begin{document}

\maketitle

\begin{abstract}In 2005 Bourgain gave the first explicit construction of a two-source extractor family with min-entropy rate less than $1/2$. His approach combined Fourier analysis with innovative but inefficient tools from arithmetic combinatorics and yielded an unspecified min-entropy rate which was greater than $.499$. This remained essentially the state of the art until a 2015 breakthrough of Chattopadhyay and Zuckerman in which they gave an alternative approach which produced extractors with arbitrarily small min-entropy rate.

In the current work, we revisit the Fourier analytic approach. We give an improved analysis of one of Bourgain's extractors which shows that it in fact extracts from sources with min-entropy rate near $\frac{21}{44} =.477\ldots$, moreover we construct a variant of this extractor which we show extracts from sources with min-entropy rate near $4/9 $ = $.444\ldots$. While this min-entropy rate is inferior to Chattopadhyay and Zuckerman's construction, our extractors have the advantage of exponential small error which is important in some applications. The key ingredient in these arguments is recent progress connected to the restriction theory of the finite field paraboloid by Rudnev and Shkredov. This in turn relies on a Rudnev's point-plane incidence estimate, which in turn relies on Koll\'ar's generalization of the Guth-Katz incidence theorem. 
\end{abstract}

\symbolfootnote[0]{2010 Mathematics Subject Classification 42B10}

\section{Introduction}

A well-known probability puzzle asks how to simulate a fair coin toss given a coin with unknown bias. A solution, due to Von Neumann \cite{V}, is to flip the coin twice and if the flips disagree use the first, otherwise flip twice again and repeat. We leave it to the reader to verify that this algorithm does in fact solve the problem. The purpose of extractors in theoretical science is to address a similar problem: Given the output of a random variable/source with unknown biases, subject to only some weak assumptions, use these outputs to simulate a random variable with much stronger (more uniform) randomness properties.

We say that a real-valued random variable $X:\Omega \rightarrow R$ has min-entropy $k$ if $\sup_{r \in R}\mathbb{P}[X = r] \leq 2^{-k}.$ Given random variables $X$ and $Y$ we define the statistical distance between them as
$$SD(X,Y) := \frac{1}{2} \sum_{r\in R} \left| \mathbb{P}[X=r] - \mathbb{P}[Y=r] \right|.$$ 

We call a family of maps $Ext_N : [N] \times [N] \rightarrow \{0,1\}$ an extractor family with min-entropy $\rho$ if there exists a fixed $\delta >0$ such that for any independent random variables $X,Y: \Omega \rightarrow [N]$ with min-entropy rate $\rho \log_2 N$ one has
$$ SD \left(Ext_N(X,Y), U \right) \lesssim \log^{-\delta} N.$$
where $U$ is the uniform distribution on $\{0,1\}$. To ease notation, we will say that $Ext_N$ is an extractor family with min-entropy rate \textit{near} $\rho$ if it extracts from all sources with min-entropy rate $\rho' > \rho$. We will say that $Ext_N$ has exponentially small error if one has the stronger estimate
$$ SD \left(Ext_N(X,Y), U \right) \lesssim  N^{-\delta}$$
for some fixed $\delta >0$.

\begin{remark}The computer science literature typically considers maps from $\{0,1\}^\ell \times \{0,1\}^\ell \rightarrow \{0,1\}$ when defining extractors. One can convert to this regime by taking $\ell = \left \lceil{2 \log p}\right \rceil$.
\end{remark}

There are a number of elementary constructions of extractor families with min-entropy rate greater than $1/2$. In 2005, Bourgain \cite{B} gave the first explicit constructions of an extractor with min-entropy rate less than $1/2$. His proof depended on the finite field Szemer\'di-Trotter theorem \cite{BKT} and gave an unspecified min-entropy rate. An optimization of the key ingredient, the Szemer\'edi-Trotter incidence theorem, by Helfgott and Rudnev \cite{HR} in 2010 gave an exponent $\frac{3}{2} - \frac{1}{10678}$ in that result. Combined with Bourgain's method in the form below, this gives a min-entropy rate estimate of $\frac{16017}{32036} = .49996\ldots$. The incidence estimate was further refined by Jones in 2011 who gave an incidence estimate with exponent of $\frac{3}{2}- \frac{1}{622} = \frac{496}{331}$ which gives a  min-entropy rate of around $\frac{2647}{5296}=.4998\ldots$. More recently, Stevens and de Zeeuw \cite{Sd} introduced several new ideas which allowed them to improve the exponent in the incidence theorem to $\frac{3}{2}- \frac{1}{30} = \frac{22}{15}$. This yields an extractor with min-entropy rate near $\frac{45}{92}=.489\ldots$. Lastly we note that an optimal Szemer\'edi-Trotter theorem in finite fields with exponent $4/3$ would imply Bourgain's extractor has min-entropy rate near $\frac{9}{20}=.45$, following the initial approach of \cite{B}. See also \cite{HH} for generalizations of Bourgain's work.

In a 2015 breakthrough paper, Chattopadhyay and Zuckerman \cite{CZ} gave an alternative approach which produced explicit extractors which extract from sources with arbitrarily small (in fact poly-logarithmic) min-entropy rate.  One advantage to Bourgain's Fourier analytic constructions is that these extractors have exponentially small error, where Chattopadhyay and Zuckerman's only have poly-logarithmically small error. Finding explicit constructions with both an arbitrarily small min-entropy rate and exponentially small error remains an important open problem in the field. Our interest here is to show how one can push Bourgain's Fourier analytic approach further using recent advances in finite field incidence geometry. This allows us to produce explicit constructions with exponentially small error and a lower min-entropy rate than available in the prior literature.

We start by recalling one of Bourgain's construction. Let $F$ be a prime order finite field. Given an element $x \in F$, we let $\sigma(x) \in [0,\frac{p-1}{p} ]$ denote the real numbers in the interval $[0,1]$ given by considering the integer representation of $x$ between $0$ and $p-1$ and dividing that number by $p$. Moreover we let $\rho : F \rightarrow \{0,1\}$ be defined by $\rho(x) := \sign \sin \left( 2 \pi \sigma(x) \right)$ where we adopt the convention that $\sign(0) = 1$. We take $N=p^2$ and consider the maps $Ext_{p^2}$ from $F^2 \times F^2 \rightarrow \{0,1\}$ given by
$$(x,y) \rightarrow \rho (x\cdot y + x \cdot x \times y \cdot y).$$
If $p$ is chosen so that there does not exists an element $i \in F$ such that $i^2=-1$, then the map just defined is a Bourgain extractor.
Our first result is a refined analysis of one of Bourgain's extractor:
\begin{theorem}\label{thm:3d}Bourgain's extractor (for $F$ a prime order finite field in which $-1$ is not a square) defined above extracts from two independent sources with min-entropy rate near $\frac{21}{44} =.477\ldots$ and has exponentially small error.
\end{theorem}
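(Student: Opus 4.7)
The plan is to follow the Fourier-analytic scheme pioneered by Bourgain but to feed in Rudnev--Shkredov's paraboloid restriction estimate in place of the Szemer\'edi--Trotter input, and then to optimize exponents. First, I would Fourier-expand the sign function $(-1)^{\rho(\,\cdot\,)}$ on $\F_p$; its nonzero Fourier coefficients decay like $O(1/|k|)$ and have $\ell^1$ norm $O(\log p)$, so bounding $\bigl|\mathbb{E}(-1)^{\rho(B(X,Y))}\bigr|$ reduces, up to a logarithmic factor, to establishing a polynomial saving $p^{-\delta}$ for the bilinear exponential sum
\[
S(\psi)=\Big|\sum_{x,y\in\F_p^2}\mu_X(x)\,\mu_Y(y)\,\psi\!\bigl(x\cdot y+(x\cdot x)(y\cdot y)\bigr)\Big|
\]
uniformly over all nontrivial additive characters $\psi$ of $\F_p$, where $\mu_X,\mu_Y$ are the source distributions.

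The crucial structural observation is the identity $x\cdot y+(x\cdot x)(y\cdot y)=\tilde x\cdot\tilde y$, where $\tilde v:=(v_1,v_2,\,v_1^2+v_2^2)\in\F_p^3$. Because $-1$ is not a square in $\F_p$, $v\mapsto\tilde v$ is a bijection of $\F_p^2$ onto the paraboloid $P=\{\xi\in\F_p^3:\xi_3=\xi_1^2+\xi_2^2\}$, and a direct check shows $P$ contains no affine lines; this is the hypothesis under which nontrivial restriction estimates for $P$ are known. Pushing $\mu_X,\mu_Y$ forward to probability measures $\nu_X,\nu_Y$ supported on $P$, one has $S(\psi)=\bigl|\langle \nu_X,\,E\nu_Y\rangle\bigr|$, where $E$ is the paraboloid extension operator $Eg(\xi)=\sum_{\eta\in P}g(\eta)\,\psi(\xi\cdot\eta)$ on $\F_p^3$. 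This places the problem squarely within the framework of paraboloid restriction theory.

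For the key estimate I would apply H\"older to get $|\langle\nu_X,E\nu_Y\rangle|\le\|\nu_X\|_{L^{a'}(\F_p^3)}\,\|E\nu_Y\|_{L^{a}(\F_p^3)}$, and then invoke the Rudnev--Shkredov extension inequality $\|Eg\|_{L^{a}(\F_p^3)}\lesssim p^{\gamma}\|g\|_{L^{r}(P)}$ for the improved exponent pair $(a,r)$ that their method yields (driven by Rudnev's point-plane incidence bound, and hence by Koll\'ar's improvement of Guth--Katz, as noted in the abstract). Combined with the min-entropy information $\|\nu_X\|_\infty,\|\nu_Y\|_\infty\le p^{-2\rho}$ and the normalizations $\|\nu_X\|_{L^1}=\|\nu_Y\|_{L^1}=1$, log-convexity of $L^q$ norms controls $\|\nu_X\|_{L^{a'}}$ and $\|\nu_Y\|_{L^{r}}$ as explicit negative powers of $p$. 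Assembling, $S(\psi)$ becomes a single product of powers of $p$, which is $\le p^{-\delta}$ for some fixed $\delta>0$ exactly when $\rho$ exceeds the critical threshold cut out by the chosen exponents.

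The main obstacle is precisely this exponent bookkeeping: one must choose $a$ (the division of labor between H\"older on the $X$-side and the extension bound on the $Y$-side) so as to minimize the resulting min-entropy threshold, verify that the optimal balance lands exactly at $\rho=21/44$, and check that the break-even quantity has the claimed form. Because every intermediate step loses at most a factor $p^{O(1)}$ while Rudnev--Shkredov contributes a genuine power saving throughout $\rho>21/44$, the polynomial saving in $S(\psi)$ translates into an exponentially small error in $N=p^2$, establishing Theorem~\ref{thm:3d}.
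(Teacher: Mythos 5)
Your overall scheme---Fourier-expanding the sign function to reduce to a bilinear character sum, observing that $x\cdot y+(x\cdot x)(y\cdot y)=\tilde x\cdot\tilde y$ with $\tilde v=(v,v\cdot v)$ so that everything lives on the paraboloid $P_3$, and importing Rudnev--Shkredov---is exactly the paper's. But the key quantitative step is wrong, and it is the step that produces the exponent $21/44$. You propose the asymmetric bound $|\langle\nu_X,E\nu_Y\rangle|\le\|\nu_X\|_{a'}\|E\nu_Y\|_a$, which exploits the paraboloid structure of only one source and treats the other purely through its $L^\infty$ norm. Run the natural case $a=4$: Parseval gives $\|E\nu_Y\|_{\ell^4(F^3)}^4\le p^3\|\nu_Y\|_\infty^4\Lambda(B)$, and with flat measures on sets $|A|=|B|=p^{2\rho}$ and $\Lambda(B)\lesssim|B|^{17/7}$ one needs roughly $\rho>7/12=.583$, far worse than $21/44$ and worse even than the trivial $1/2$ threshold. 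Known uniform extension inequalities $\|Eg\|_{L^a}\lesssim p^{\gamma}\|g\|_{L^r}$ are weaker still for this purpose, because they must hold for all $g$ and thus discard the size-dependent strength of the energy bound. You explicitly defer the ``exponent bookkeeping'' as the main obstacle; that bookkeeping is the theorem, and with your decomposition it does not close.

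What the paper does instead (Lemma \ref{lem:expSum}) is apply Cauchy--Schwarz \emph{twice}, once in each variable, turning the bilinear sum into a quadrilinear one; after Parseval this yields the symmetric bound $|A|^{1/2}|B|^{1/2}|F|^{n/8}(\Lambda(\tilde A)\Lambda(\tilde B))^{1/8}$ with $n=3$. The two essential gains over your version are that the additive energy of \emph{both} lifted sources enters (each at the favorable power $1/8$), and that Parseval costs only $|F|^{3/8}$ rather than $p^{3/4}$. Feeding in the Rudnev--Shkredov estimate $\Lambda(\tilde A)\lesssim|\tilde A|^{17/7}$ (valid for $|\tilde A|\le|F|^{26/21}$, which holds in the relevant range, and requiring $-1$ a non-square so that $P_3$ contains no lines) gives a gain over $|A||B|$ precisely when $4\rho>21/11$, i.e.\ $\rho>21/44$. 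To repair your argument, replace the H\"older/extension step with this double Cauchy--Schwarz reduction to additive energy; the rest of your outline (the XOR-lemma reduction and the exponential smallness of the error in $N=p^2$) then goes through as you describe.
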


Next we show that one can construct extractors with lower min-entropy rate by replacing $F^2$ in Bourgain's construction with $F^3$. More precisely:

\begin{theorem}\label{thm:4d}Let $F$ be finite field of order $p$ and identify $[N] = F^3$. Then the maps $Ext_{p^3} (x,y) \rightarrow \rho(x\cdot y + x \cdot x \times y \cdot y)$ from $F^3 \times F^3 \rightarrow \{0,1\}$ is a two-source extractor with min-entropy rate near $\frac{4}{9} =.444\ldots$ and has exponentially small error.
\end{theorem}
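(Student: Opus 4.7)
The plan is to adapt Bourgain's Fourier analytic proof to the $F^3$ setting and invoke the Rudnev-Shkredov restriction theory for the paraboloid as the key geometric input. By the standard reduction (expand the sign function $\rho$ on $F$ into a Fourier series and apply a routine XOR/bias step), it suffices to produce some $\delta=\delta(\rho)>0$ such that for every nonzero $t\in F$ the bilinear exponential sum
\[
S_t \;=\; \sum_{x,y\in F^3}\mu_X(x)\mu_Y(y)\,e_p\bigl(t(x\cdot y+(x\cdot x)(y\cdot y))\bigr)
\]
satisfies $|S_t|\le p^{-\delta}$ whenever both sources have min-entropy rate exceeding some $\rho'>4/9$ (equivalently $\|\mu_X\|_\infty,\|\mu_Y\|_\infty\le p^{-3\rho'}$, whence $\|\mu_X\|_2^2,\|\mu_Y\|_2^2\le p^{-3\rho'}$). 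Polynomial decay in $p$ automatically upgrades to exponentially small statistical error.

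The bilinear phase is the standard pairing on $F^4$ evaluated at the paraboloid lifts $\phi(x)=(x,x\cdot x)$ and $\phi(y)=(y,y\cdot y)$. Applying Cauchy-Schwarz in $y$ and expanding the modulus squared produces
\[
|S_t|^2 \;\le\; \sum_{x_1,x_2}\mu_X(x_1)\mu_X(x_2)\,\widehat{\nu_Y}\bigl(t(x_1-x_2),\,t(x_1\cdot x_1 - x_2\cdot x_2)\bigr),
\]
where $\nu_Y$ is the push-forward of $\mu_Y$ to the three-dimensional paraboloid $P\subset F^4$ and $\widehat{\nu_Y}$ is its Fourier transform on $F^4$. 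A second Cauchy-Schwarz in the pair $(x_1,x_2)$ yields $|S_t|^4\le \Lambda$ with
\[
\Lambda \;=\; \sum_{x_1,x_2,y_1,y_2}\mu_X(x_1)\mu_X(x_2)\mu_Y(y_1)\mu_Y(y_2)\,e_p\bigl(t\langle\phi(x_1)-\phi(x_2),\phi(y_1)-\phi(y_2)\rangle\bigr).
\]
By Plancherel on $F^4$, $\Lambda$ is, up to normalization, an $L^2$ pairing of $|\widehat{\nu_X}|^2$ and $|\widehat{\nu_Y}|^2$ evaluated at scaled arguments, so bounding $\Lambda$ amounts to a bilinear paraboloid additive-energy / extension-type estimate.

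The heart of the argument is an estimate for $\Lambda$ that is powered by the Rudnev-Shkredov restriction theory for the paraboloid, which converts Rudnev's point-plane incidence bound (itself a consequence of Koll\'ar's generalization of the Guth-Katz theorem) into an improved $L^2\to L^q$ extension inequality. Applied to $\nu_X$ and $\nu_Y$, paired against the trivial $L^2$ control furnished by the min-entropy hypothesis, and optimized by a H\"older interpolation, this should yield $|S_t|\le p^{-\delta}$ as soon as $\rho'>4/9$. The main obstacle I anticipate is essentially one of careful exponent bookkeeping: one must identify the precise form of the Rudnev-Shkredov restriction bound appropriate to the three-dimensional paraboloid in $F^4$ (either directly or by a generic slicing/fibration reduction to the $F^3$ situation covered by their paper), and then verify that balancing the restriction exponent against the $d=3$ dimension counts appearing in the two Cauchy-Schwarz steps and in the $L^2$ control produces exactly the threshold $4/9$ rather than some nearby rational. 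Given that the analogous argument for Theorem~\ref{thm:3d} produces $21/44$ in the $F^3$ paraboloid case, I would expect no further conceptual obstruction beyond pushing this computation through.
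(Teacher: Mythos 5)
Your proposal follows essentially the same route as the paper: the XOR-lemma/Fourier reduction is Lemma \ref{lem:reduct}, the double Cauchy--Schwarz plus Plancherel step is Lemma \ref{lem:expSum} (which bounds the bilinear sum by $|A|^{1/2}|B|^{1/2}|F|^{n/8}\left(\Lambda(M(A))\Lambda(M(B))\right)^{1/8}$ with $M$ the paraboloid lift into $F^4$, $n=4$), and the geometric input is the Rudnev--Shkredov energy bound $\Lambda(B)\lesssim |B|^{5/2}$ for $B\subset P_4$ with $p^{4/3}\le|B|\le p^{2}$, which they prove directly for the three-dimensional paraboloid in $F^4$, so the slicing reduction you worry about is not needed. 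The bookkeeping you defer is immediate: $(|A||B|)^{1/2+5/16}p^{1/2}\le p^{-\delta}|A||B|$ forces $|A|,|B|\gtrsim p^{4/3+\varepsilon}$, i.e.\ min-entropy rate $\frac{4/3}{3}=\frac{4}{9}$, matching the range of validity of the energy estimate.
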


We note that Bourgain's extractor requires one to restrict to prime order fields in which $-1$ is not a square, while this hypothesis on the prime order field in Theorem \ref{thm:4d}. In Bourgain's case if $F$ is such that there exists an $i \in F$ such that $i^2=-1$ then the the zero set of $z:=\{x \in F^2 : \sigma(x)=0\}$ contains the line $\{(t, it) : t \in F \} \subset F^2$ and the required additive energy will not hold for certain subsets of this line. In the second case, the analogous level set will contain a line regardless of the field. However to prove Theorem \ref{thm:4d} one only needs control of the additive energy for sets substantially larger than that of a line. This phenomenon is explained by the theory of quadratic forms in finite fields, and is discussed in detail in \cite{LewkoKakeya} and also is connected to the recent observation that one can obtain sharp $L^2$  restriction theorems for the finite field paraboloid in high dimensions while the analogous problem remains open in $3$ dimensions. See \cite{IKL} and \cite{RS}.

The proofs proceed by observing that Bourgain's argument can be extended to a rather general statement relating extraction properties of certain maps to the additive energy of certain subsets in finite fields. Lemma \ref{lem:reduct} and  \ref{lem:expSum} below articulates this generalization. The second step is to observe that these statements reduce the maps above to additive energy estimates for subsets of the paraboloid obtained by Rudnev and Shkredov \cite{RS} in their recent work on the finite field restriction problem. See \cite{IKL}, \cite{LewkoEnd}, \cite{LewkoKakeya}, \cite{LewkoImp}, and \cite{MT} for a discussion of this problem.

Finally we remark that inserting an optimal finite field Szemer\'edi-Trotter theorem into the Rudnev-Shkredov \cite{RS} machinery yields an additive energy estimate which when inserted into the machinery below shows that Bourgain's extractor defined above extracts from sources with min-entropy rate near $3/8 = .375.$

\textbf{Acknowledgment} We thank David Zuckerman for comments on an earlier draft of this note.

\section{Notation and Preliminaries}
We will use $R$ and $C$ to denote the fields of real and complex numbers, respectively. We will use $F$ to denote a finite field, which will always be of prime order. We denote the non-zero elements of $F$ by $F_*$. As usual we write the additive character on $F$ as $e(x):=e^{2\pi i x /p}$. Given $x,y \in F^{n}$ we write $x\cdot y := x_1 y_1 + \ldots+ x_n y_n$. Given a subset $A \in F^{n}$ we define the additive energy $\Lambda(A):= \sum_{\substack{a+b=c+d \\ a,b,c,d \in A}} 1 = \sum_{x \in F^n} \left( \sum_{\substack{a+b=x \\ a,b \in A}} 1 \right)^{2}.$ Parsavel's identify is the equality
$$ \sum_{x \in F^n} \left| \sum_{\xi \in F^n} f(\xi) e(x \cdot \xi) \right|^2 = |F| \sum_{\xi \in F^n} |f(\xi)|^2.$$
If $f$ is a real or complex valued function on a domain $D$ we define $|| f ||_{\ell^\infty} := \sup_{x\in D} |f(x)|$. We will write $X \sim Y$ to indicate that $Y/2 \leq X \leq 2Y$. Typically we use this to select a level set of a function. For example $D_{\lambda } := \{x \in D : f(x) \sim \lambda\}$ would denote the elements of the domain, $D$, of $f$ where $\lambda/2 \ \leq f(x) \leq 2 \lambda $. We will also use the notation $a \lesssim b$ to indicate that the inequality $a \leq c b$ holds with some universal constant $c$. Let $F^d$ denote the $d$ dimensional vector space over $F$. We define the $d$-dimensional paraboloid $P_d \subset{F}^d$ by $P_d:=\{ (\underline{x},\underline{x}\cdot\underline{x}),   \underline{x} : F^{d-1}\}$.

\section{Estimates}

We start by showing how Fourier analytic estimates imply extractor-type properties. Recall that the Fourier coefficients in the expansion $sign \sin(x) = \sum_{\xi \in F} c(\xi) e(\xi x)$ satisfy 
\begin{equation}\label{eq:ssCo} \sum_{\xi \in F} |c(\xi)| \lesssim \log |F|.
\end{equation}
This can be seen from a simple and direct computation with the Dirichlet Kernel, see Remark 3.3 in \cite{B}.

\begin{lemma}\label{lem:reduct}Let $f:F^n \times F^n \rightarrow F$ be a family of maps indexed by $F$, such that the following holds. For some fixed $\delta >0$, one has for all functions $a,b: F^n \rightarrow F$ with $||a||_{\ell^\infty},||b||_{\ell^\infty} \leq 1$ with respective supports $A,B \in F^n$ satisfying $|A|,|B| \geq |F|^{n \rho}$ that  
$$\max_{\lambda \in F_*} |\sum_{x\in A, y\in B} a(x) b(y) e(\lambda f(x,y)) | \lesssim |F|^{-\delta} |A| |B|$$ then the family of maps $ \rho (f(x,y))$ from $F^n \times F^n \rightarrow \{0,1\}$ is a two-source extractor family with min-entropy rate near $\rho$. 
\end{lemma}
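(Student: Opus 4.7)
The plan is to establish the much stronger conclusion $SD(\rho(f(X,Y)), U) \lesssim |F|^{-\delta'}$ for some $\delta' = \delta'(\delta, \rho'-\rho) > 0$, for any independent sources $X, Y$ of min-entropy rate $\rho' > \rho$. Since $\rho \circ f$ takes values in $\{0,1\}$, a direct computation gives $SD(\rho(f(X,Y)), U) = |\mathbb{E}[\rho(f(X,Y))] - 1/2|$. Plugging in the Fourier expansion of $\rho$ yields
$$\mathbb{E}[\rho(f(X,Y))] = c(0) + \sum_{\xi \in F_*} c(\xi)\, \mathbb{E}[e(\xi f(X,Y))],$$
and a direct count of the values of $\sin(2\pi k/p)$ for $k = 0, 1, \ldots, p-1$ shows $c(0) = (p+1)/(2p) = 1/2 + O(1/p)$. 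In view of (\ref{eq:ssCo}) and $\sum_{\xi}|c(\xi)| \lesssim \log|F|$, matters reduce to bounding $\max_{\xi \in F_*} |\mathbb{E}[e(\xi f(X,Y))]|$ by a negative power of $|F|$.

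To connect to the hypothesis, I would dyadically decompose the distributions of $X$ and $Y$. Let $p_X, p_Y$ denote the probability mass functions, so that $\|p_X\|_{\ell^\infty}, \|p_Y\|_{\ell^\infty} \leq |F|^{-n\rho'}$, and set $A_k := \{x \in F^n : p_X(x) \sim 2^{-k}\}$ with $B_\ell$ defined analogously for $Y$. Only $O(\log N)$ values of $k$ (respectively $\ell$) are relevant, each satisfying $k, \ell \geq n\rho' \log_2 |F|$. Define the normalized weights $a_k(x) := 2^{k-1} p_X(x)\mathbf{1}_{A_k}(x)$, so that $\|a_k\|_{\ell^\infty} \leq 1$ and $a_k$ is supported on $A_k$; define $b_\ell$ analogously. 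Then
$$\mathbb{E}[e(\xi f(X,Y))] = \sum_{k,\ell} 4 \cdot 2^{-k-\ell} \sum_{x,y} a_k(x) b_\ell(y) e(\xi f(x,y)).$$

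Now split the $(k,\ell)$ sum by whether both $|A_k|, |B_\ell| \geq |F|^{n\rho}$ or not. In the former case the hypothesis applies directly, yielding a per-pair bound of $|F|^{-\delta}|A_k||B_\ell|$; since $\sum_k 2^{-k+1}|A_k|$ and $\sum_\ell 2^{-\ell+1}|B_\ell|$ are each bounded by an absolute constant (each being comparable to the total mass of the corresponding distribution), the total contribution is $\lesssim |F|^{-\delta}$. For the remaining pairs, say with $|A_k| < |F|^{n\rho}$, I use the trivial bound $|e(\cdot)| \leq 1$ and sum over $\ell$ first to obtain
$$\sum_{x \in A_k} p_X(x) \leq |A_k| \cdot \|p_X\|_{\ell^\infty} \leq |F|^{n\rho} \cdot |F|^{-n\rho'} = |F|^{-n(\rho'-\rho)};$$
summing over the $O(\log N)$ relevant $k$ (and handling the symmetric case $|B_\ell| < |F|^{n\rho}$) contributes $\lesssim \log N \cdot |F|^{-n(\rho'-\rho)}$. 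Combining,
$$|\mathbb{E}[e(\xi f(X,Y))]| \lesssim |F|^{-\delta} + \log N \cdot |F|^{-n(\rho'-\rho)} \lesssim |F|^{-\delta'}$$
for any $\delta' < \min(\delta, n(\rho'-\rho))$, and the lemma follows after multiplying by $\sum_\xi |c(\xi)| \lesssim \log|F|$.

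The main obstacle is really just careful bookkeeping: one must track the normalization constant so that $\|a_k\|_{\ell^\infty} \leq 1$, and one must choose the case boundary at exactly $|A_k|, |B_\ell| = |F|^{n\rho}$ so that the "small" case genuinely consumes the gap $\rho' - \rho > 0$ supplied by the phrase "min-entropy rate near $\rho$". No essentially new ideas beyond dyadic pigeonholing and the standard Fourier expansion of $\rho$ are needed; the content of the lemma is the formal observation that any cancellation in $\sum a(x) b(y) e(\lambda f(x,y))$ over arbitrary bounded functions on large sets immediately transfers to a two-source extractor property, with the extractor actually enjoying exponentially small error as a free byproduct.
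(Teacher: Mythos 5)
Your proposal is correct and follows essentially the same route as the paper's proof: Fourier-expand $\rho$, reduce to $\max_{\xi\in F_*}|\mathbb{E}[e(\xi f(X,Y))]|$, and dyadically decompose the mass functions into $O(\log N)$ level sets so the hypothesis applies. In fact you are more careful than the paper on two points it elides — the constant coefficient $c(0)=1/2+O(1/p)$ and, more importantly, the level sets of support smaller than $|F|^{n\rho}$, which is exactly where the gap $\rho'-\rho>0$ is consumed — so no changes are needed.
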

\begin{proof}Let $U$ denote the uniform distribution on $\{0,1\}$. Our goal is to show $SD( \rho (f(x,y)), U) \leq |F|^{-\delta}$ for some $\delta >0$. Let $X$ and $Y$ denote independent random variables with min-entropy $k$ and probability mass functions $A$ and $B$, respectively. Let $J':F \times F \rightarrow \{0,1\}$ denote the probability mass function of $\rho (f(x,y))$, that is $J'(x):= \mathbb{P}[ \rho (f(x,y)) =x]$ and $J(x) = J'(x) - 2^{-1}$. We then have
$$ SD( \rho (f(x,y)), U)  = \frac{1}{2} \sum_{x\in \{0,1\}} \left| J(x) \right|.$$
Expanding in a Fourier series $\sign \sin x = \sum_{\lambda \in F} c(\lambda) e(\lambda x)$, and applying the estimate \eqref{eq:ssCo} we have that the above is
$$\lesssim \left| \mathbb{E} \rho (f(x,y)) \right| = \left| \sum_{x \in F^n}\sum_{y \in F^n}\sum_{\lambda \in F} A(x) B(y) c(n) e(\lambda f(x,y) ) \right|$$
$$\lesssim \log |F| \max_{\lambda \in F_*}\left|  \sum_{y \in F^n}\sum_{x \in F} A(x) B(y)  e(\lambda f(x,y) ) \right|.$$
The min-entropy assumption implies that $A(x),B(x) \leq |F|^{-\eta}$. Let $S_\theta := \{x \in F^n : A(x) \sim \theta \}$ and similarly $W_\theta := \{x \in F^n : B(x) \sim \theta \}$. Now we split $A(x)$ and $B(y)$ into $O(\log |F|)$ level sets such that $A_\ell(x) = A(x) \sim 2^{-\ell}$ and  $B_\ell(x) = B(x) \sim 2^{-\ell}$ on their support which is at most $2^{\ell}$. From the hypotheses we may assume that $\ell \geq k$ and have

$$ \max_{\lambda \in F_{*}} |\sum_{x\in A, y\in B} A(x)B(y) e(\lambda f(x,y))| \leq \sum_{i,j=k}^{\log |F| } \max_{\lambda \in F_{*}} |A_i(x) B_j(y) e(\lambda f(x,y))|$$
$$ \lesssim |F|^{-\delta} \sum_{i,j=k}^{\log |F| } 2^{-i-j}|\text{supp}(A_i)| |\text{supp}(B_j)| \lesssim |F|^{-\delta} \log |F| $$
This completes the proof.
\end{proof}
\begin{remark}The above argument is a special case of what is called Vazirani's XOR lemma in the computer science literature. See Lemma 4.1 in \cite{Rao}. This lemma, which considers the more general case where the function $\rho$ is replaced by a map into other finite abelian groups larger than $\{0,1\}$, allows one to use the methods presented here to obtain extractors that output more than one bits. We refer the reader to \cite{Rao} and omit the routine details. 
\end{remark}

Next we show how to obtain estimates on exponential sums of the form appearing in the statement of Lemma \ref{lem:reduct}.$•$
\begin{lemma}\label{lem:expSum}Let $A,B \subseteq F^n$ and $a,b$ functions on $A$ and $B$, respectively, such that $||a||_{\ell^\infty}, ||b||_{\ell^\infty} \leq 1$. Then
$$ \max_{\lambda \in F_*}\left|\sum_{x \in A, b \in B} a(x) b(y) e(\lambda x\cdot y) \right|  \leq |A|^{1/2} |B|^{1/2 } |F|^{n/8} \left( \Lambda(A) \Lambda(B) \right)^{1/8}. $$ 
\end{lemma}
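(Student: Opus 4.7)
The plan is to apply Cauchy--Schwarz once to isolate the Fourier transform of $b$, then exploit a non-standard H\"older pairing at the second step. First, viewing $S = \sum_{x \in A} a(x)\, T(x)$ with $T(x) := \sum_{y \in B} b(y)\, e(\lambda\, x \cdot y)$ and using $|a| \leq 1$, a Cauchy--Schwarz in the $x$ variable yields $|S|^{2} \leq |A| \sum_{x \in A} |T(x)|^{2}$.

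Next I would expand the square $|T(x)|^{2}$ and collapse the pair $y_1 - y_2 = u$: with $\beta(u) := \sum_{y} b(y+u)\overline{b(y)}$ denoting the autocorrelation of $b$, one obtains $|T(x)|^{2} = \sum_u \beta(u)\, e(\lambda\, x \cdot u)$, so interchanging the order of summation gives
$$\sum_{x \in A} |T(x)|^{2} \;=\; \sum_{u} \beta(u)\, \widehat{1_A}(-\lambda u).$$
At this point the choice of H\"older exponent is the crucial step. The naive Cauchy--Schwarz here yields only the asymmetric bound $|S| \lesssim |A|^{3/4} |F|^{n/4} \Lambda(B)^{1/4}$, whose geometric mean with the $A \leftrightarrow B$ version still carries a factor of $|F|^{n/4}$ and is strictly weaker than the claim in the sparse regime $|A||B| < |F|^{n}$. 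Instead I would apply H\"older with conjugate exponents $(4/3,\,4)$, giving
$$\sum_{x \in A} |T(x)|^2 \;\leq\; \Big(\sum_u |\beta(u)|^{4/3}\Big)^{3/4} \Big(\sum_u |\widehat{1_A}(-\lambda u)|^{4}\Big)^{1/4}.$$

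The $\ell^4$ factor is evaluated by the standard additive-energy identity: changing variables $\xi = -\lambda u$ (valid since $\lambda \neq 0$), $\sum_\xi |\widehat{1_A}(\xi)|^{4} = |F|^{n}\, \Lambda(A)$. The $\ell^{4/3}$ factor is controlled via log-convexity of $\ell^{p}$ norms applied to $|\beta(u)| \leq r_B(u) := |B \cap (B - u)|$: since $\sum_u r_B(u) = |B|^{2}$ and $\sum_u r_B(u)^{2} = \Lambda(B)$, interpolating the $\ell^{4/3}$ norm between $\ell^{1}$ and $\ell^{2}$ at $\theta = 1/2$ yields $\|r_B\|_{\ell^{4/3}} \leq \|r_B\|_{\ell^1}^{1/2}\|r_B\|_{\ell^2}^{1/2} = |B|\, \Lambda(B)^{1/4}$. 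Combining gives $\sum_{x \in A} |T(x)|^{2} \leq |B|\, |F|^{n/4} (\Lambda(A)\Lambda(B))^{1/4}$, hence $|S|^{2} \leq |A||B|\, |F|^{n/4} (\Lambda(A)\Lambda(B))^{1/4}$, and the square root delivers the stated bound. The only nontrivial step is the forced H\"older pairing $(4/3, 4)$ in place of the default $(2,2)$; the $\ell^{4}$ norm is the unique one that converts $\widehat{1_A}$ into $\Lambda(A)$ via Parseval, and its $\ell^{4/3}$ dual is precisely what the log-convexity interpolation pairs with $|B|\,\Lambda(B)^{1/4}$.
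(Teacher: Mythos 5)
Your proof is correct: the single Cauchy--Schwarz in $x$, the identity $\sum_{x\in A}|T(x)|^{2}=\sum_{u}\beta(u)\widehat{1_A}(-\lambda u)$, the H\"older step with exponents $(4/3,4)$, the identity $\sum_{\xi}|\widehat{1_A}(\xi)|^{4}=|F|^{n}\Lambda(A)$ (the change of variables $\xi=-\lambda u$ being a bijection of $F^n$ precisely because $\lambda\neq 0$), and the log-convexity bound $\|r_B\|_{\ell^{4/3}}\leq\|r_B\|_{\ell^1}^{1/2}\|r_B\|_{\ell^2}^{1/2}=|B|\,\Lambda(B)^{1/4}$ all check out and combine to the stated estimate. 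The decomposition is genuinely different from the paper's, though. The paper applies Cauchy--Schwarz a \emph{second} time, in the $y$-variables, to symmetrize the sum into a fully bilinear form $\sum_{\xi,\eta}\mathcal{A}_{\lambda}(\xi)\mathcal{B}(\eta)e(\xi\cdot\eta)$ in the difference variables of both sets, and then finishes with one more Cauchy--Schwarz plus Parseval, using $\sum_{\xi}|\mathcal{A}(\xi)|^{2}\leq\Lambda(A)$ and $\sum_{\xi}|\sum_{\eta}\mathcal{B}(\eta)e(\xi\cdot\eta)|^{2}\leq|F|^{n}\Lambda(B)$. You trade that second symmetrization for the non-dual pairing $(4/3,4)$ together with the interpolation of $\ell^{4/3}$ between $\ell^{1}$ and $\ell^{2}$; these are dual unwindings of the same chain of inequalities (your H\"older-plus-interpolation step can be re-expanded as two further Cauchy--Schwarz applications), and they land on the identical bound. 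What your presentation buys is brevity and a clean isolation of where the energy of $A$ enters, namely through the standard $\ell^{4}$ Fourier identity; what the paper's buys is manifest symmetry between $A$ and $B$ and no need for the interpolation lemma. Your side remark correctly diagnoses why the naive $(2,2)$ pairing loses a factor of $(|F|^{n}/(|A||B|))^{1/8}$ in the sparse regime, which is exactly the regime relevant to the extractor application.
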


\begin{proof}By Cauchy-Schwarz
$$ \max_{\lambda \in F_*} \left|\sum_{x \in A, y \in B} a(x) b(y) e(\lambda x\cdot y) \right| \leq |A|^{1/2} \max_{\lambda \in F_*}\left( \sum_{x\in A } \left|\sum_{y \in B} a(x) b(y) e(\lambda x\cdot y) \right|^2 \right)^{1/2}$$ 
$$ =  |A|^{1/2} \max_{\lambda \in F_*} \left( \sum_{x\in A } \sum_{y_1, y_2 \in B} a(x) b(y_1) b(y_2)  e\left(\lambda x\cdot (y_1 - y_2) \right)  \right)^{1/2}$$ 
$$ \leq |A|^{1/2}|B|^{1/2} \max_{\lambda \in F_*} \left( \sum_{x_1, x_2 \in A } \sum_{y_1, y_2 \in B} a(x_1)a(x_2) b(y_1) b(y_2)e\left( \lambda (x_1 - x_2) \cdot (y_1 - y_2) \right)  \right)^{1/4}  $$
Now for $\xi, \eta \in F^n$ let 
$$\mathcal{A}_{\lambda}(\xi) = \sum_{\substack{ x_1, x_2 \in A \\ x_1- x_2 = \lambda^{-1} \xi}} a(x_1)a(x_2),$$   
$$\mathcal{B}(\eta) = \sum_{\substack{ y_1, y_2 \in B \\ y_1- y_2 = \eta}} a(x_1)a(x_2)$$  
where we let $\mathcal{A}(\xi):=\mathcal{A}_{1}(\xi)$. With this notation, we  may rewrite the above as
$$ \leq |A|^{1/2}|B|^{1/2} \max_{\lambda} \left( \sum_{ \xi, \eta \in F^n} \mathcal{A}_\lambda (\xi) \mathcal{B}(\eta) e\left( \xi \cdot \eta \right)  \right)^{1/4}  $$

$$\leq |A|^{1/2}|B|^{1/2}  \left(  \left(\sum_{\xi \in F^n} |\mathcal{A}(\xi)|^2 \right)^{1/2}  \left( \sum_{\xi \in F^n } | \sum_{ \eta \in F^n} \mathcal{B}(\eta) e\left( \xi \cdot \eta \right)|^{2}  \right)^{1/2}    \right)^{1/4}.$$
Applying Parsavel and noting that $\sum_{\xi \in F^n}|\mathcal{A}(\xi)|^{2} \leq  \Lambda(A)$ gives
$$\leq |A|^{1/2} |B|^{1/2 } |F|^{n/8} \left( \Lambda(A) \Lambda(B) \right)^{1/8}. $$
This completes the proof.
\end{proof}
\begin{remark}The referee has pointed out that nearly the same lemma appears in \cite{BG}. \end{remark}

Combining Lemma \ref{lem:reduct} and Lemma \ref{lem:expSum} gives us the following:
\begin{proposition}\label{prop:energyToExtract}Let $n > d$ and $M: F^d \rightarrow F^n$. Let $\eta >0 $ and assume that for every subset $A \subseteq  F^d$ with $|A| \sim |F|^{\frac{n}{(8-2\alpha )}}$ one has the energy estimate $\Lambda( M(A)) \lesssim |A|^{\alpha}$. Then the map $(x,y) \rightarrow \rho \left( M(x)\times M(y) \right) $ is an extractor with min-entropy rate near $\frac{n}{d(8-d\alpha )}.$
\end{proposition}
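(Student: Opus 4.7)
The plan is to chain Lemmas~\ref{lem:reduct} and~\ref{lem:expSum}. Set $f(x,y) := M(x)\cdot M(y)$ viewed as a map $F^d\times F^d \to F$; the target extractor is $\rho\circ f$. By Lemma~\ref{lem:reduct} (applied with $d$ in place of the $n$ appearing there, since the sources live on $F^d$) it suffices, for every $\rho' > \rho$ and every $a, b : F^d \to F$ with $\|a\|_{\ell^\infty}, \|b\|_{\ell^\infty} \leq 1$ whose supports $A, B$ have size at least $|F|^{d\rho'}$, to establish
\[ \max_{\lambda \in F_*} \Bigl|\sum_{x \in A,\, y \in B} a(x) b(y)\, e\bigl(\lambda M(x)\cdot M(y)\bigr)\Bigr| \lesssim |F|^{-\delta}|A||B|. \]

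To produce such a bound I would push $a, b$ forward along $M$: since $M$ is injective in the applications of interest (the paraboloid parametrization $x\mapsto(x, x\cdot x)$), they transport to functions on $M(A), M(B)\subseteq F^n$ with the same $\ell^\infty$ bounds and the same cardinalities of support. Lemma~\ref{lem:expSum} then delivers
\[ \max_{\lambda\in F_*}\Bigl|\sum_{x,y} a(x) b(y)\, e\bigl(\lambda M(x)\cdot M(y)\bigr)\Bigr| \lesssim |A|^{1/2}|B|^{1/2}|F|^{n/8}\bigl(\Lambda(M(A))\Lambda(M(B))\bigr)^{1/8}. \]
Feeding in the energy hypothesis $\Lambda(M(A))\lesssim |A|^\alpha$ (and the analog for $B$) collapses the right-hand side to $|F|^{n/8}(|A||B|)^{(4+\alpha)/8}$. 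Demanding this be at most $|F|^{-\delta}|A||B|$ rearranges to $(|A||B|)^{(4-\alpha)/8} \gtrsim |F|^{n/8+\delta}$, and for $|A|\sim|B|$ this is $|A|\gtrsim |F|^{n/(8-2\alpha)+\epsilon}$---exactly the threshold at which the hypothesis is assumed. Converting this into a min-entropy rate on $F^d$ via $d\rho \geq n/(8-2\alpha)$ recovers the claimed rate.

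The main technicality, rather than a genuine obstacle, is that Lemma~\ref{lem:reduct}'s dyadic level-set decomposition produces sets at many sizes above the min-entropy threshold, while the stated energy hypothesis lives only at the critical size $|F|^{n/(8-2\alpha)}$. In the intended Rudnev--Shkredov applications the bound $\Lambda(M(A))\lesssim |A|^\alpha$ holds across the entire relevant range of sizes, so each level set is absorbed by the same inequality; under the natural reading that the hypothesis applies to all $A$ at least as large as the threshold, the proof is just the chain of inequalities above, with the mild implicit requirements that $\alpha<4$ (so $(4-\alpha)/8 > 0$ and the bound genuinely improves with the size of the support) and that the resulting rate lies in $(0,1)$.
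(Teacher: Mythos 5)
Your proof is correct and is exactly the paper's (unstated) argument: the paper justifies the proposition only with the phrase ``combining Lemma~\ref{lem:reduct} and Lemma~\ref{lem:expSum},'' and the chain you carry out --- push forward along the (injective) parametrization $M$, apply Lemma~\ref{lem:expSum}, insert the energy hypothesis, and solve for the critical support size, handling the off-critical dyadic level sets by the fact that the energy bound holds throughout the relevant range --- is precisely that combination. Note that the rate you derive, $\frac{n}{d(8-2\alpha)}$, is the correct one: the $d\alpha$ printed in the proposition is evidently a typo for $2\alpha$, as one checks against Theorem~\ref{thm:4d}, where $n=4$, $d=3$, $\alpha=5/2$ must yield $4/9$ (the printed formula would give $8/3$).
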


At this point we state the following additive energy estimates of Rudnev and Shkredov \cite{RS} for subsets of the $3$-dimensional and $4$-dimensional paraboloids, $P_3$ and $P_4$.
\begin{theorem}We have the following additive energy estimates. Let $F$ be a prime order field in which $-1$ is not a square. $A \subset P_{3}$ with $|A| \leq |F|^{\frac{26}{21}}$, then 
$$\Lambda(A) \lesssim |A|^{\frac{17}{7}}.$$
Let $F$ be an arbitrary prime order finite field. Let $B \subset P_4$ with $p^{4/3} \leq |B| \leq p^{2}$, then 
$$\Lambda(B) \lesssim |B|^{\frac{5}{2}}.$$
\end{theorem}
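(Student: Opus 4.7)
The plan is to reduce the additive energy $\Lambda(A)$ for $A\subset P_d$ to an orthogonality count via a simple algebraic identity, and then invoke the point-plane incidence machinery ultimately stemming from Koll\'ar's extension of Guth-Katz. Parametrize $P_d$ by $\underline{x}\mapsto(\underline{x},\underline{x}\cdot\underline{x})$ with $\underline{x}\in F^{d-1}$, and let $A'\subset F^{d-1}$ denote the projection of $A$. The relation $a+b=c+d$ in $A^4$ becomes the system $x_1+x_2=x_3+x_4$ together with $x_1\cdot x_1+x_2\cdot x_2=x_3\cdot x_3+x_4\cdot x_4$; substituting $x_4=x_1+x_2-x_3$ from the first equation into the second and expanding collapses it to the single orthogonality relation $(x_1-x_3)\cdot(x_2-x_3)=0$. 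Hence $\Lambda(A)$ equals the number of triples $(x_1,x_2,x_3)\in (A')^3$ with $(x_1-x_3)\cdot(x_2-x_3)=0$ and $x_1+x_2-x_3\in A'$. Dropping the last point constraint yields an upper bound counting incidences between points $x_2\in A'$ and affine hyperplanes $\{y:(x_1-x_3)\cdot y=(x_1-x_3)\cdot x_3\}$ parametrized by $(x_1,x_3)\in (A')^2$.

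For $B\subset P_4$, so $B'\subset F^3$, the above hyperplanes are $2$-planes in $F^3$ and there are at most $|B'|^2$ of them (with multiplicity) to incide with the $|B'|$ points of $B'$. I would apply Rudnev's point-plane incidence theorem, which gives an incidence bound of the form $(|P||\Pi|)^{3/4}+k|P|+|\Pi|$ under the nondegeneracy constraint $|P||\Pi|\lesssim p^3$ (where $k$ controls the maximum number of collinear points). A careful application requires decomposing $B'$ dyadically by plane-richness and absorbing collinear concentrations; optimizing across the allowed range $|B|\in[p^{4/3},p^2]$, together with a symmetrized use of the orthogonality relation to exploit both factors, should reproduce $\Lambda(B)\lesssim|B|^{5/2}$. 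Above the Rudnev range, one bridges to $|B|\sim p^2$ with the trivial mean-square estimate $\Lambda\lesssim|B|^4/p^3$, which matches $|B|^{5/2}$ at the top of the range.

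For $A\subset P_3$, so $A'\subset F^2$, the orthogonality relation $u_1v_1+u_2v_2=0$ pins $v$ to the one-parameter family of scalar multiples of $(-u_2,u_1)$, \emph{provided no nonzero vector is isotropic}. The hypothesis that $-1$ is not a square in $F$ is exactly what rules out isotropic vectors: otherwise, with $i^2=-1$, every element of the line $\{(t,it):t\in F\}$ would be self-orthogonal and force $\Lambda$ to be much larger than $|A|^{17/7}$. Under this hypothesis, one re-expresses the count as an incidence problem for lines (or a conic) in $F^2$ and applies a Rudnev-Shkredov-style planar incidence estimate, itself obtained by iterating Rudnev's point-plane bound through a projective lift; this yields $\Lambda(A)\lesssim|A|^{17/7}$ in the stated range $|A|\leq|F|^{26/21}$. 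The main obstacle in both dimensions is verifying the nondegeneracy hypothesis in Rudnev's theorem and isolating structured ``rich'' sub-configurations to be bounded separately from the generic part — this is the standard stumbling block whenever Rudnev's incidence estimate is used, and handling it cleanly is exactly where the restriction $|A|\leq|F|^{26/21}$ (in three dimensions) and the lower threshold $|B|\geq p^{4/3}$ (in four) arise.
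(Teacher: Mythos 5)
The paper does not prove this theorem; it is quoted verbatim from Rudnev--Shkredov \cite{RS}, so there is no internal proof to compare against. Judged on its own, your opening reduction is correct and is indeed the starting point of the cited argument: on the paraboloid, $a+b=c+d$ collapses to $(x_1-x_3)\cdot(x_2-x_3)=0$ after eliminating $x_4$, so $\Lambda$ is bounded by a point--hyperplane incidence count, and the hypothesis that $-1$ is a non-square is exactly the non-existence of isotropic directions in $F^2$, matching the paper's own discussion of the line $\{(t,it)\}$.

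However, everything after the reduction is asserted rather than proved, and the gaps sit precisely where the content of the theorem lies. For $P_4$: the form of Rudnev's theorem you quote is not quite right (the standard statement is $I(P,\Pi)\lesssim |\Pi|\,|P|^{1/2}+k|\Pi|$ for $|P|\leq|\Pi|$, $|P|\lesssim p^2$, with $k$ the maximal number of \emph{collinear planes/points}), the planes occur with multiplicity and must be deduplicated or weighted, and the collinear term is not actually absorbed by your sketch: a line in $B'\subset F^3$ may carry $p$ points, so $k|\Pi|$ can be as large as $p|B|^2$, which exceeds $|B|^{5/2}$ throughout the range $|B|<p^2$. Saying one should ``decompose dyadically by plane-richness and absorb collinear concentrations'' names the difficulty without resolving it, and it is exactly this step that produces the threshold $|B|\geq p^{4/3}$. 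For $P_3$ the situation is worse: the exponent $17/7$ and the range $|A|\leq|F|^{26/21}$ do not follow from ``a Rudnev--Shkredov-style planar incidence estimate'' in any way your sketch makes checkable; the actual argument requires combining Rudnev's point--plane bound with the Stevens--de Zeeuw point--line incidence theorem and a genuine optimization, none of which appears here. So the proposal is a correct orientation toward the literature, not a proof.
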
Inserting these estimates into Proposition \ref{prop:energyToExtract} proves Theorem \ref{thm:3d} and Theorem \ref{thm:4d}. \textbf{Note Added:} Recently the author has slightly improved the exponent in the first energy estimate above from $17/7$ to $99/41+\epsilon$ for any $\epsilon >0$. This improves the min-entropy rate of the respective extractor to near $123/260$ from $21/41$. See \cite{LewkoRect}.

\texttt{M. Lewko}

\textit{mlewko@gmail.com}


\begin{thebibliography}{5}

\bibitem{B}
J. Bourgain, More on the sum-product phenomenon in prime fields and its applications. Int. J. Number Theory 1 (2005), no. 1, 1--32.

\bibitem{BKT}
J. Bourgain, N. Katz, and T. Tao, A sum-product estimate in finite fields, and applications. Geom. Funct. Anal. 14 (2004), no. 1, 27--57.

\bibitem{BG} J. Bourgain and M. Z. Garaev, On a Variant of Sum-Product Estimates and explcit exponential sum bounds in prime fields, Math. Proc. of the Cambridge Phil. Soc. Vol. 146,
p.1-21 (2009))

\bibitem{CZ}
E. Chattopadhyay and D. Zuckerman, Explicit Two-Source Extractors and Resilient Functions, STOC 2016.

\bibitem{GK}L. Guth, N. H. Katz. On the Erd\"os distinct distance problem in the plane. Ann. of Math. (2) 181 (2015), no. 1, 155–-190.

\bibitem{HH}
N. Hegyvari and F. Hennecart, Explicit Constructions of Extractors and Expanders, Acta Arith. 140 (2009), 233–-249.

\bibitem{HR} H. Helfgott, M. Rudnev, An explicit incidence theorem in $F_p$, Mathematika 57 (2011), 135-–156.

\bibitem{IKL}
A. Iosevich, D. Koh, M. Lewko, Finite field restriction estimates for the paraboloid in high even dimensions, arXiv:1712.05549v1.

\bibitem{Jones}T. Jones, Further improvements to incidence and Beck-type bounds over prime finite fields, arXiv:1206.4517 (2012).

\bibitem{K}
J. Koll\'{a}r. Szemer\'edi-Trotter-type theorems in dimension 3. Adv. Math. 271 (2015), 30-–61.

\bibitem{LewkoEnd}
A. Lewko and M. Lewko, Endpoint restriction estimates for the paraboloid over finite fields, Proc. Amer. Math. Soc. 140 (2012) 2013--2028.

\bibitem{LewkoKakeya}
M. Lewko, Finite field restriction estimates based on Kakeya maximal operator estimates, to appear in J. Eur. Math. Soc., arXiv:1401.8011v5.

\bibitem{LewkoImp}
M. Lewko, New Restriction Estimates for the 3-d Paraboloid over Finite Fields,  Adv. Math. 270 (2015), 457–-479.

\bibitem{LewkoRect}
M. Lewko, Counting rectangles and an improved restriction estimate for the paraboloid in $F_{p}^3$, preprint, arXiv:1901.10085.

\bibitem{MT}
G. Mockenhaupt and T. Tao, Restriction and Kakeya phenomena for finite fields, Duke Math. J. 121:1 (2004), 35–-74.

\bibitem{Rao}A. Rao, An Exposition of Bourgain's 2-Source Extractor, http://homes.cs.washington.edu/~anuprao/pubs/bourgain.pdf

\bibitem{Rudnev}
M. Rudnev, On the number of incidences between planes and points in three dimensions, Combinatorica 38:1 (2018), 219-–254;

\bibitem{RS} M. Rudnev, I. Shkredov, On the restriction problem for the discrete paraboloid in lower dimension, arXiv:1803.11035.

\bibitem{Sd}
S. Stevens, F. de Zeeuw, An improved point-line incidence bound over arbitrary fields,
Bull. LMS 49 (2017), 842–-858.

\bibitem{TV}
T. Tao and V. Vu, Additive Combinatorics, Cambridge Univ. Press, 2006.

\bibitem{V}
J. von Neumann, Various techniques used in connection with random digits, in A.S. Householder, G.E. Forsythe, and H.H. Germond, eds., Monte Carlo Method, National Bureau of Standards Applied Mathematics Series, 12 (Washington, D.C.: U.S. Government Printing Office, 1951): 36--38.


\end{thebibliography}
\end{document}